\numberwithin{equation}{section}
\def\H{{\cal H}}
\def\R{\mathbb{R}}
\def\H1{H^1(\R)}
\newtheorem{thm}{Theorem}
\newtheorem{lem}{Lemma}
\newtheorem{prop}{Proposition}
\newcommand{\Extend}[5]{\ext@arrow0099{\arrowfill@#1#2#3}{#4}{#5}}
\begin{document}

\setcounter{page}{1}

\title[Solitary wave solutions for DNLS]{Orbital stability of solitary waves for derivative nonlinear Schr\"odinger equation}

\author{Soonsik Kwon}
\address{Department of Mathematical Sciences, Korea Advanced Institute of Science and Technology, 291 Daehak-ro, Yuseong-gu, Daejeon 34141, Korea.}
\email{soonsikk@kaist.edu}
\thanks{}

\author{Yifei Wu}
\address{Center for Applied Mathematics, Tianjin University, Tianjin 300072, P.R.China}
\email{yerfmath@gmail.com}
\thanks{S.K. is partially supported by NRF of Korea (No. 2015R1D1A1A01058832) and POSCO science fellowship. Y.W. is partially supported by the NSF of China (No. 11571118).}
\subjclass[2010]{Primary  35Q55; Secondary 35A01}


\keywords{derivative NLS, orbital stability, solitary wave, energy space}

\maketitle

\begin{abstract}\noindent
In this paper, we show the orbital stability of solitons arising in the cubic derivative nonlinear Schr\"odinger equations. We consider the zero mass case that is not covered by earlier works \cite{GuWu95,CoOh-06-DNLS}. As this case enjoys $L^2$ scaling invariance, we expect the orbital stability in the sense up to scaling symmetry, in addition to spatial and phase translations. For the proof, we are based on the variational argument and extend a similar argument in \cite{Wu2}. Moreover, we also show a self-similar type blow up criteria of solutions with the critical mass $4\pi$.
\end{abstract}

\section{Introduction}
We study the orbital stability of soliton solutions arising in the nonlinear Schr\"odinger equation with derivative (DNLS):
 \begin{equation}\label{eqs:DNLS}
   \left\{ \aligned
    &i\partial_{t}u+\partial_{x}^{2}u=i\partial_x(|u|^2u),\qquad t\in \R, x\in \R,
    \\
    &u(0,x)  =u_0(x)\in   H^1(\mathbb{R}).
   \endaligned
  \right.
 \end{equation}

The well-posedness for the equation \eqref{eqs:DNLS} is intensive studied. Especially, it was proved by Hayashi and Ozawa \cite{Ha-93-DNLS,HaOz-92-DNLS,HaOz-94-DNLS, Oz-96-DNLS} the local well-posedness in $\H1$ and  the global well-posedness when the initial data satisfies
$
\int_\R|u_0(x)|^2\,dx< 2\pi.
$
The results are analogous to that for the focusing quintic nonlinear Schr\"odinger equation. There are many low regularity local and global well-posedness results \cite{Ta-99-DNLS-LWP, Ta-01-DNLS-GWP, CKSTT-01-DNLS, CKSTT-02-DNLS, Herr, Grhe-95, NaOhSt-12, Miao-Wu-Xu:2011:DNLS}.
Recently, Wu \cite{Wu1,Wu2} showed that global well-posedness  holds as long as $
\int_\R|u_0(x)|^2\,dx< 4\pi$. In \cite{Wu2} the author observed that the threshold $4\pi$ corresponds to the mass of a ground state. This observation draws our attention to study the orbital stability or instability of soliton solutions with the critical mass $4\pi$.
\\
As is shown in \cite{GuWu95,CoOh-06-DNLS}, the equation \eqref{eqs:DNLS} has two parameter family of solitons of the form
$$ u_{\omega,c}(t,x) = \phi_{\omega,c} (x+ct)e^{i\omega t- i\frac{c}{2}(x+ct)+ \frac{3}{4}i\int_{-\infty}^{x+ct} |\phi_{\omega,c} (y)|^2\,dy},$$
where $(\omega,c) \in \R\times \R$, and $\phi_{\omega,c}$ is a ground state solution to the elliptic equation
\begin{equation}\label{eq:elliptic}
 -\partial_{xx} \phi + \big( \omega -\frac{c^2}{4})\phi +\frac{c}{2}\phi^3 -\frac{3}{16} \phi^5 =0, \end{equation}
If $c^2< 4\omega $, then $\phi_{\omega,c}$ shows an exponential decay:
$$ \phi_{\omega,c}(x) = \Big(\frac{\sqrt \omega}{4\omega -c^2}\big[\cosh(\sqrt{(4\omega-c^2) x}) -\frac{c}{\sqrt{4\omega}} \big]  \Big)^{-\frac 12}, $$
and the mass of $\phi_{\omega,c} $ is given by
$$ \|\phi_{\omega,c}\|_{L^2}^2  = 8 \tan^{-1} \sqrt{\frac{\sqrt{4\omega}+c }{\sqrt{4\omega}-c }}  <4\pi.$$
 The orbital stability of those solitons was proved in \cite{GuWu95} for $c<0$ and $c^2<4\omega$ and in \cite{CoOh-06-DNLS} for any $ c^2 <4\omega$. Here, the orbit is given by the phase and spatial translation. See \cite{Ohta-14} for the related studies.\\
In this work, we consider the endpoint case, $c^2=4\omega$. It is called \emph{the zero mass case} in view of \eqref{eq:elliptic}. Let $W$ be a ground state of the elliptic equation
\begin{align}\label{elliptic}
-\Phi_{xx}+\Phi^3-\frac{3}{16}\Phi^5=0.
\end{align}
Then, $ W_c(x) =c^{\frac 12} W(cx)$ is also the ground state solution to $$ -\Phi_{xx}+\frac{c}{2}\Phi^3-\frac{3}{16}\Phi^5=0, $$
and we have
$$ W(x) = 2^{\frac32}\big(4x^2+1\big)^{-\frac 12}, \qquad  \|W\|^2_{L^2} =\|W_c\|^2_{L^2}= 4\pi.$$
The corresponding solitary wave solution to \eqref{eqs:DNLS} with $4\pi$ mass is
\begin{align}\label{soliton}
R(t,x)=e^{\frac{3}{4}i\int_{-\infty}^{x+2t} |W(y)|^2\,dy} e^{-it-ix}W(x+2t).
\end{align}
We recall the mass, energy and momentum conservation laws:
\begin{align*}
M(u(t)) &=\int |u(t)|^2 \,dx, \\
E(u(t)) &= \int |u_x(t)|^2 +\frac 32 \text{Im} |u(t)|^2u(t) \overline{u_x(t)} +\frac 12 |u(t)|^6 \,dx, \\
P(u(t)) &= \text{Im} \int \overline{u(t)}u_x(t) -\frac 12 \int |u(t)|^4 \,dx.
\end{align*}
One may observe that $E(R)=P(R)=0$ and $M(R)=4\pi$.
Similarly, we denote $R_\lambda(t,x)=\lambda^{\frac12}R(\lambda^2 t, \lambda x)$. Then $R_\lambda$ is also a solution to \eqref{eqs:DNLS}. As opposed to the case of $ c^2 <4\omega $, the conservation laws do not restrict rescaling of solutions. Thus, our  main theorem of the \emph{conditional} orbital stability includes scaling parameter, in addition to the phase and spatial translation.
\begin{thm}\label{thm:main}
For any $\varepsilon>0$, there exists a $\delta=\delta(\varepsilon)$ such that if
\begin{align}\label{assump:intial}
\|u_0-R(0)\|_{H^1}\le \delta,
\end{align}
then for any $t\in I=(-T_*,T^*)$ (the maximal lifespan), either
$$
\|u(t)\|_{L^6}^6\le \sqrt\delta,
$$
or there exist $\theta(t)\in
[0,2\pi), y(t)\in\R$, and $\lambda(t)\in [ \lambda_0,\infty)$ for some constant $\lambda_0>0$, such that
$$
\|u(t)-e^{i\theta}R_\lambda(t,\cdot-y)\|_{H^1}\le \varepsilon.
$$
\end{thm}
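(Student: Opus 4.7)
I would argue by contradiction. Suppose the conclusion fails: there exist $\varepsilon_0>0$, initial data $u_{0,n}\to R(0)$ in $H^1$, and times $t_n$ in the maximal lifespan of $u_n$ for which $u_n(t_n)$ stays $\varepsilon_0$-far from the orbit in the statement. Conservation of $M$, $E$, $P$ together with their $H^1$-continuity give
\[ M(u_n(t_n))\to 4\pi,\qquad E(u_n(t_n))\to 0,\qquad P(u_n(t_n))\to 0. \]
The theorem is thus reduced to the following profile-reconstruction claim: any $H^1$-bounded sequence $\{v_n\}$ with these three asymptotics admits, after a phase rotation, a spatial translation, and an $L^2$-invariant rescaling $v\mapsto\lambda^{1/2}v(\lambda\cdot)$ with some $\lambda_n\ge\lambda_0>0$, a subsequence converging to $R(0)$ strongly in $H^1$. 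The a priori $H^1$-boundedness of $\{v_n\}$ itself would come from pushing the sharp Gagliardo--Nirenberg-type inequality of \cite{Wu2} up to the critical mass $4\pi$.

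The analytic core of the argument, extending \cite{Wu2}, is a sharp variational characterization of $R(0)$ as the essentially unique extremizer of an $L^2$-scaling-invariant functional built from $M$, $E$, $P$. I would derive it by (i) applying a Hayashi--Ozawa-type gauge transformation to recast $E$ and $P$ without the derivative nonlinearity; (ii) combining the Pohozaev and Nehari identities attached to \eqref{elliptic}; and (iii) invoking uniqueness of positive ground states of \eqref{elliptic}, together with the scaling $W\mapsto W_c$, to identify the equality cases as $\{e^{i\theta}R_\lambda(0,\cdot-y)\}$. The resulting inequality should provide a coercive lower bound, scaling-invariant in form, vanishing precisely on that orbit.

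With such an inequality in hand, a profile for $\{v_n\}$ is extracted by Lions' concentration-compactness principle, augmented to accommodate the scaling symmetry (as in critical NLS problems). The vanishing and dichotomy alternatives are excluded because each would produce a strictly positive deficit in the limiting functional, contradicting $E(v_n)\to 0$; in the compactness alternative there exist $y_n\in\R$ and $\lambda_n>0$ for which $\lambda_n^{-1/2}v_n(\lambda_n^{-1}(\cdot+y_n))$ converges weakly in $H^1$ to an extremizer, and sharpness upgrades weak to strong convergence. The extremizer is then identified as $e^{i\theta_\infty}R(0,\cdot-y_\infty')$. The one-sided bound $\lambda_n\ge\lambda_0$ would be established a posteriori from the modulation equations: initial $H^1$-closeness to $R(0)$ forbids drift to arbitrarily large spatial scale ($\lambda\to 0^+$), while small-scale concentration ($\lambda\to\infty$) cannot be so excluded, consistent with the one-sided interval in the statement.

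The principal obstacle is the variational step. In the subcritical regime $c^2<4\omega$ of \cite{GuWu95,CoOh-06-DNLS}, the Lyapunov functional $E+\omega M+cP$ has a positive-definite Hessian at the soliton modulo the symmetry directions, and the Grillakis--Shatah--Strauss framework applies directly. At the endpoint $c^2=4\omega$ that Hessian acquires an additional degeneracy along the scaling direction; one must replace it by a genuinely scaling-invariant functional whose sole extremizer (modulo all symmetries, scaling included) is $R$, and propagate the coercivity through a modulation decomposition enlarged by a scaling parameter. Executing this endpoint extension of \cite{Wu2}, with careful tracking of scaling compatibility in every estimate, is where the main difficulty lies.
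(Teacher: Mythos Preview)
Your contradiction/concentration-compactness outline is in the right spirit, and the paper does rest on a variational rigidity statement proved via profile decomposition (Proposition~\ref{prop:W-rigidity}). However, there is a genuine gap in your reduction, and the paper's execution is structurally different from what you propose.

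The gap is the claimed a priori $H^1$-boundedness of $v_n=u_n(t_n)$. This cannot hold in general: the theorem itself allows $\lambda(t)\in[\lambda_0,\infty)$ with no upper bound, so $\|\partial_x u(t)\|_{L^2}$ may be arbitrarily large along the flow (Theorem~\ref{thm:main2} addresses exactly this blow-up scenario). No Gagliardo--Nirenberg inequality at mass $4\pi$ can produce such a bound, since equality is attained there. Any concentration-compactness argument must be run on a sequence that has \emph{first} been rescaled to normalize, say, $\|v_n\|_{L^6}$; the scaling parameter then emerges from this normalization, not from a Lions dichotomy. Your sketch has this backwards.

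The paper does not argue by contradiction on a time sequence but proceeds directly and quantitatively. After the gauge transform \eqref{gauge1} and the Galilean shift $w(t,x)=e^{-it+ix}v(t,x-2t)$, the conserved combination $\widetilde E+2\widetilde P+M$ coincides with $S(w(t))$, so $S(w(t))=S(W)+O(\delta)$ for all $t$. The rigidity Proposition~\ref{prop:W-rigidity} (if $|S(g)-S(W)|+|K(g)|$ is small then $g$ is $\dot H^1$-close to the orbit of $W$) is then applied twice. First, it forces a uniform lower bound $\|v(t)\|_{L^6}\ge\varepsilon_0$ (Lemma~\ref{Lemma3.1}): if $\|v(t)\|_{L^6}$ were small then $K(w(t))$ would also be small, hence $w(t)$ would be near $W$, contradicting smallness of $\|v(t)\|_{L^6}$. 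Second --- and this is the step entirely absent from your sketch --- one shows that the scaling-invariant ratio $f(t)=\|v(t)\|_{L^4}^4/\|v(t)\|_{L^6}^3$ satisfies $f(t)^2=\tfrac{8}{3}\pi+O(\delta)$: combining the sharp inequality \eqref{sharp Gagliardo-Nirenberg2} with the momentum law yields a cubic inequality $X^3-M_0X^2+16M_0C_{GN}^{-18}\le O(\delta)$ in $X=f^2$, which at $M_0=4\pi$ has the unique root $X=\tfrac{8}{3}\pi$. With $f(t)$ pinned and $\|v(t)\|_{L^6}\ge\varepsilon_0$, one rescales by $\lambda(t)=\|W\|_{L^6}/\|v(t)\|_{L^6}$ (bounded above, which is what gives the one-sided interval for $\lambda$), reads off $\|v_\lambda\|_{L^4}^4$, $\|v_\lambda\|_{L^6}^6$, and $\|\partial_x w(\lambda)\|_{L^2}^2$ from the conservation laws, verifies $K(w(\lambda))=O(\delta)$, and invokes Proposition~\ref{prop:W-rigidity} a second time.

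Thus the profile decomposition lives inside the proof of the abstract rigidity Proposition~\ref{prop:W-rigidity} (for generic $g\in H^1$ with $S(g)\approx S(W)$ and $K(g)\approx 0$), not on a solution sequence $u_n(t_n)$; the bridge from the flow to that proposition is the algebraic control of $f(t)$, which replaces your proposed vanishing/dichotomy analysis and supplies the missing control on the $\dot H^1$ scale.
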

\noindent The theorem implies that if initial data is close to the critical travelling wave, and the corresponding solution is inflation in some $L^p$ norm (ex. blowup in finite and infinite time), then the solution is also around the critical travelling wave solution up to the scaling, the phase and spatial translation. Hence, if $R(t,x)$ is instable, then it can be regarded as a type of geometry instability.

Moreover, from an extension of our argument we can also show a self-similar type blow-up criteria of solutions with the critical mass, which is equal to that of the ground state $W$.
\begin{thm}\label{thm:main2}
Let $u_0\in \H1$ with $\|u_0\|_{L^2}=\|W\|_{L^2}$. Suppose that the solution $u$ to \eqref{eqs:DNLS} blows up in the finite time $T^*$, then there exist $\theta(t)\in
[0,2\pi), y(t)\in\R$, such that when $t\to T^*$,
$$
e^{-i\theta(t)}u_{\lambda(t)}\big(t,\cdot+y(t)\big)-R(t)\to 0,\quad \mbox{ strongly in } \H1,
$$
where $u_\lambda(t,x)=\lambda^{\frac12}u(\lambda^2t, \lambda x)$, and $\lambda(t)=\|\partial_x W\|_{L^2}/\|\partial_x v(t)\|_{L^2}$.
\end{thm}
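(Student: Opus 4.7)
The plan is to combine the scaling behavior of the conserved quantities of \eqref{eqs:DNLS} with a variational rigidity at the critical mass that characterizes $W$ (equivalently $R$) as the unique minimizer up to symmetries. First, I would set $v(t,\cdot):=u_{\lambda(t)}(t,\cdot)$ with $\lambda(t)=\|\partial_x W\|_{L^2}/\|\partial_x u(t)\|_{L^2}$. By $L^2$-scaling invariance together with mass conservation, $\|v(t)\|_{L^2}=\|u(t)\|_{L^2}=\|W\|_{L^2}$; by the choice of $\lambda(t)$, $\|\partial_x v(t)\|_{L^2}=\|\partial_x W\|_{L^2}$, so $\{v(t)\}$ is uniformly bounded in $H^1$. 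A direct computation yields the scaling identities $E(u_\lambda(t))=\lambda^2 E(u(\lambda^2 t))$ and $P(u_\lambda(t))=\lambda P(u(\lambda^2 t))$; combined with conservation of $E$ and $P$ for $u$ this gives
\begin{equation*}
E(v(t))=\lambda(t)^{2}\,E(u_0),\qquad P(v(t))=\lambda(t)\,P(u_0).
\end{equation*}
Finite-time blow-up forces $\|\partial_x u(t)\|_{L^2}\to\infty$, hence $\lambda(t)\to 0$, and therefore $E(v(t))\to 0$ and $P(v(t))\to 0$ as $t\to T^*$.

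Next I would set up the variational characterization via the DNLS gauge transform. Writing $\Phi:=\exp\bigl(-\tfrac{3i}{4}\int_{-\infty}^{x}|v|^2\,dy\bigr)\,v$, one has $|\Phi|=|v|$ and the identity
\begin{equation*}
E(v)=\int\bigl(|\partial_x\Phi|^{2}-\tfrac{1}{16}|\Phi|^{6}\bigr)\,dx.
\end{equation*}
The sharp one-dimensional Gagliardo-Nirenberg inequality, whose critical mass matches the ground-state mass $4\pi$ and whose extremizers are exactly the quintic ground state up to scaling, translation and phase, gives $E(v)\ge 0$ at $\|v\|_{L^2}^{2}=4\pi$, with equality iff $\Phi$ is an extremizer. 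Consequently any sequence $(v_n)$ with $\|v_n\|_{L^2}=\|W\|_{L^2}$, $\|\partial_x v_n\|_{L^2}=\|\partial_x W\|_{L^2}$ and $E(v_n)\to 0$ is a minimizing sequence for the critical-mass variational problem, and the rigidity of its extremizers is the key input for the profile analysis.

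For the final step I would run a concentration-compactness/profile decomposition in the spirit of the argument in \cite{Wu2}. Along any $t_n\to T^*$, I choose $(\theta_n,y_n)$ via a standard modulation procedure (imposing two orthogonality conditions on $w_n:=e^{-i\theta_n}v(t_n,\cdot+y_n)$ against the infinitesimal generators of phase and translation for $W$) and extract a weak $H^1$-limit $w_\infty$. Applying Lions' concentration-compactness principle, vanishing is excluded by the fixed positive mass and $\dot H^1$ seminorm, while dichotomy is excluded by strict subadditivity at the critical mass (itself a consequence of the rigidity of Gagliardo-Nirenberg extremizers). Hence $w_n$ is relatively compact modulo translations, $w_\infty$ minimizes $E$ among functions of mass $\|W\|_{L^2}$ and therefore equals $W$ up to a residual phase and translation that can be absorbed into $\theta_n,y_n$; matching of mass and $\dot H^1$ seminorm upgrades weak to strong $H^1$ convergence, and undoing the gauge transform converts this into the stated convergence to $R(t)$. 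Convergence along the whole family $t\to T^*$ (rather than just a subsequence) follows from the usual contradiction argument.

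The main obstacle I foresee is the dichotomy-exclusion step: at the critical mass both $L^2$-scaling and spatial translation act on the variational problem, so one must prove strict subadditivity of the minimization functional in a setting where the sharp Gagliardo-Nirenberg inequality is tight, and this rests on the precise characterization of its extremizers. A secondary, bookkeeping-type, difficulty is to identify the nontrivial gauge phase, the $e^{-it-ix}$ factor and the translation by $-2t$ appearing in the definition of $R(t)$ with the modulation parameters $\theta(t),y(t)$ and the gauge transform used in the proof; this is routine but has to be tracked carefully to match the exact statement of Theorem \ref{thm:main2}.
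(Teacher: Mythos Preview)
Your scaling reduction in the first paragraph is essentially the paper's own reduction (the paper applies it to the gauge variable $v$ rather than to $u$, which is why $\lambda(t)$ in the statement is built from $\partial_x v$, not $\partial_x u$, but this is cosmetic). The real problem is the variational step.

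Your central claim, that the sharp $L^2$--$\dot H^1$--$L^6$ Gagliardo--Nirenberg inequality has critical mass $4\pi$ with extremizer $W$ and hence forces $E(v)\ge 0$ when $\|v\|_{L^2}^2=4\pi$, is false. The extremizer of that inequality is the pure quintic ground state $Q$, not $W$; in the DNLS normalization its critical mass corresponds to the old Hayashi--Ozawa threshold $2\pi$, not $4\pi$. Concretely, $W$ itself satisfies $\|W_x\|_{L^2}^2=2\pi$ and $\|W\|_{L^6}^6=96\pi$, so
\[
\int |W_x|^2-\tfrac{1}{16}\int |W|^6=2\pi-6\pi=-4\pi<0,
\]
and after $L^2$-scaling this functional is unbounded below at mass $4\pi$. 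Hence $E(v(t))\to 0$ does \emph{not} make $\Phi(t)$ a minimizing sequence for any variational problem whose minimizers are $W$, and your concentration-compactness step has nothing to bite on: vanishing and dichotomy cannot be excluded from $E\to 0$ alone, because $E$ is not even coercive here.

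What is missing is exactly the use of the momentum. The paper never works with the $L^2$--$L^6$ inequality; it uses the $L^4$--$L^6$--$\dot H^1$ inequality \eqref{sharp Gagliardo-Nirenberg2}, whose extremizer \emph{is} $W$. The information $P(v_\lambda)\to 0$ enters through the Galilean-boosted variable $w=e^{-it+ix}v_\lambda(\cdot-2t)$ and the functionals $S,K$: one gets $S(w)\to S(W)$ directly from $E,P,M$, but controlling $K(w)$ (equivalently, pinning the ratio $f(t)=\|v\|_{L^4}^4/\|v\|_{L^6}^3$) requires the argument of Proposition~\ref{prop:f}, where the momentum law \eqref{mom-law} is used in an essential way via \eqref{15.57}. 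You record $P(v(t))\to 0$ but then drop it; without it the rigidity step cannot be closed, and the proof does not go through.
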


The proof of the theorems are based on the following variational result. Let the quantities
\begin{align}
S(w)=&\|w_x\|_{L^2}^2+\frac12\|w\|_{L^4}^4-\frac{1}{16}\|w\|_{L^6}^6;\\
K(w)=&6\|w\|_{L^4}^4-\|w\|_{L^6}^6.
\end{align}
We note that $K(W)=0$. Then we have the following rigidity of $W$.
\begin{prop}\label{prop:W-rigidity}
Let $g\in \H1$. For any $\varepsilon>0$, there exists $\varepsilon_0$, such that if
\begin{align}\label{condition-sckc}
\big|S(g)-S(W)\big|< \varepsilon_0,\quad K(g)\le 0,
\end{align}
then
$$
\inf\limits_{(\theta,y)\in\R^2} \big\|g-e^{i\theta}W(\cdot-y)\big\|_{\dot H^1}<\varepsilon.
$$
\end{prop}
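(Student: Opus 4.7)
I would argue by contradiction, combining a variational characterization of $W$ with a concentration-compactness analysis. Suppose the conclusion fails: there exist $\varepsilon>0$ and $g_n\in H^1$ with $|S(g_n)-S(W)|+|K(g_n)|\to 0$ yet $\inf_{(\theta,y)}\|g_n-e^{i\theta}W(\cdot-y)\|_{\dot H^1}\ge \varepsilon$. The starting algebraic observation is
$$
S(g)-\tfrac{1}{16}K(g)=\|g_x\|_{L^2}^2+\tfrac{1}{8}\|g\|_{L^4}^4,
$$
which packages the controlled combination as a sum of non-negative quantities. This immediately yields uniform bounds on $\|g_n'\|_{L^2},\|g_n\|_{L^4},\|g_n\|_{L^6}$ and the convergence $\|g_n'\|_{L^2}^2+\tfrac18\|g_n\|_{L^4}^4\to 2\|W_x\|_{L^2}^2$. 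An amplitude rescaling $g_n\mapsto \alpha_n g_n$ with $\alpha_n^2=6\|g_n\|_{L^4}^4/\|g_n\|_{L^6}^6\to 1$ reduces the problem to a genuine minimizing sequence $\tilde g_n$ for $S$ on the constraint manifold $\mathcal M=\{g\neq 0:K(g)=0\}$.

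The heart of the proof is the sharp Gagliardo--Nirenberg-type inequality
$$
\|g\|_{L^6}^6\le \frac{3}{\|W_x\|_{L^2}^{4/3}}\,\|g_x\|_{L^2}^{2/3}\|g\|_{L^4}^{16/3},
$$
with equality precisely on the orbit $\{e^{i\theta}W(\cdot-y)\}$. The exponents are forced by invariance under $g\mapsto\alpha g$ and $g(\cdot)\mapsto g(\mu\cdot)$, and the Euler--Lagrange equation of the ratio recovers \eqref{elliptic} at $W$. Restricted to $\mathcal M$ this bound becomes $\|g_x\|_{L^2}^2\|g\|_{L^4}^4\ge \|W_x\|_{L^2}^2\|W\|_{L^4}^4$; combining with the elementary AM--GM inequality for $\|g_x\|_{L^2}^2+\tfrac18\|g\|_{L^4}^4$ and the identity $\|W\|_{L^4}^4=8\|W_x\|_{L^2}^2$ (Nehari plus Pohozaev applied to \eqref{elliptic}) yields $S(g)\ge S(W)$ on $\mathcal M$, with equality only on the orbit $\{e^{i\theta}W(\cdot-y)\}$.

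To upgrade this rigidity at equality into compactness, I would run a translation-only profile decomposition $\tilde g_n=\sum_{j\le J}\phi^j(\cdot-x_n^j)+r_n^J$ in $\dot H^1\cap L^4$, use Pythagorean orthogonality of the three norms $\|\cdot\|_{\dot H^1}^2,\|\cdot\|_{L^4}^4,\|\cdot\|_{L^6}^6$ along the profiles, and exploit the strict superadditivity in the sharp GN to force at most one nontrivial profile: any other configuration would push $S(\tilde g_n)$ strictly above $S(W)$ in the limit. That single profile must saturate the sharp GN, hence equals a modulated $W$; the residual vanishes in $\dot H^1$ and, since $\alpha_n\to 1$, the strong convergence transfers back to $g_n$, contradicting the standing assumption. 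The main obstacle is the absence of any a priori $L^2$ bound, so one must separately exclude ``spreading'' scenarios (vanishing-amplitude bumps spread over widening supports) in which the $L^4$ mass could escape; this is precisely where the normalization $K(\tilde g_n)=0$ is decisive, because via $\|\tilde g_n\|_{L^6}^6=6\|\tilde g_n\|_{L^4}^4$ it keeps $\|\tilde g_n\|_{L^4}^4$ bounded below along the minimizing sequence and rules out vanishing in the sense of Lions.
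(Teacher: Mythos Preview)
Your overall strategy---contradiction, the identity $S-\tfrac{1}{16}K=\|g_x\|_{L^2}^2+\tfrac18\|g\|_{L^4}^4$, a translation-only profile decomposition, and identification of the unique minimizer---matches the paper's. The differences are in the execution. To show that $W$ minimizes $S$ on $\{K=0\}$ you invoke the sharp Gagliardo--Nirenberg inequality together with AM--GM and the Pohozaev relations; the paper instead proves a short sign lemma (if $\|f_x\|_{L^2}^2+\tfrac18\|f\|_{L^4}^4\le d$ then $K(f)\ge0$), applies it to each profile $V^j$ to force $K(V^j)=0$ and hence $S(V^j)\ge d$, and only afterwards identifies the minimizer via a Lagrange-multiplier computation plus ODE uniqueness. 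The paper's route never rescales the sequence and never appeals to ``strict superadditivity''; the sign lemma does the work of excluding dichotomy directly.

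There is, however, a genuine circularity in your reduction step. You assert $\alpha_n^2=6\|g_n\|_{L^4}^4/\|g_n\|_{L^6}^6\to1$, but
\[
\alpha_n^2-1=\frac{K(g_n)}{\|g_n\|_{L^6}^6},
\]
so this limit already presupposes that $\|g_n\|_{L^6}$ stays bounded below---which is exactly the nondegeneracy you propose to extract only \emph{after} rescaling, from $K(\tilde g_n)=0$. In the vanishing scenario $g_n=w_n^{1/2}\phi(\cdot/w_n)$ with $w_n\to0$ and $\|\phi'\|_{L^2}^2=S(W)$, one has $S(g_n)\to S(W)$ and $K(g_n)\to0$, yet $\alpha_n\to\infty$ and $S(\alpha_n g_n)\to\infty$, so $\tilde g_n$ is not a minimizing sequence and your fix does not apply. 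The paper's argument avoids the rescaling altogether, so it does not face this particular circularity; its sign lemma handles the profiles without needing $\|g_n\|_{L^6}$ bounded below a priori. (In the paper's actual applications an additional mass constraint $M(g)=M(W)+o(1)$ is always available, which is what ultimately excludes such spreading sequences.)
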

\noindent We provide the proof of Proposition~\ref{prop:W-rigidity} in Section 2. We use the fact that  $W$ is  an optimal function of a sharp Gagliardo-Nirenberg inequality (see \cite{Agueh}),
\begin{equation}\label{sharp Gagliardo-Nirenberg2}
\|f\|_{L^6}\leq C_{GN} \|f\|_{L^4}^{\frac89}\|f_x\|_{L^2}^{\frac19},
\end{equation}
where we denoted $C_{GN}$ to be the sharp constant: $C_{GN}=3^{\frac16}(2\pi)^{-\frac19}$.
Roughly speaking, Proposition~\ref{prop:W-rigidity} tells that if a function closely attains the equality of the sharp Gagliardo-Nirenberg inequality \eqref{sharp Gagliardo-Nirenberg2}, then it is close to $W$ up to the symmetries of spatial, phase translation and scaling.\\
\\
The strategy to prove Theorem \ref{thm:main} and Theorem \ref{thm:main2}  is a variational argument. In addition, we combine it with the argument in \cite{Wu2}. To do this, we use the following gauge transformation. Let
\begin{equation}\label{gauge1}
v(t,x):=e^{-\frac{3}{4}i\int_{-\infty}^x |u(t,y)|^2\,dy}u(t,x),
\end{equation}
then from \eqref{eqs:DNLS}, $v$ is the solution of
 \begin{equation}\label{eqs:DNLS-under-gauge1}
   \left\{ \aligned
    &i\partial_{t}v+\partial_{x}^{2}v=\frac i2|v|^2v_x-\frac i2v^2\bar{v}_x-\frac
     3{16}|v|^4v,\qquad t\in \R, x\in \R,
    \\
    &v(0,x)  =v_0(x),
   \endaligned
  \right.
 \end{equation}
where $v_0(x):=e^{-\frac{3}{4}i\int_{-\infty}^x |u_0|^2\,dy}u_0$.
First, we use a similar argument in \cite{Wu2} to show that
$
f(t):=\|v(t)\|_{L^4}^4/\|v(t)\|_{L^6}^3,
$
is close to $\sqrt{\frac{8}3\pi}$. This almost fixes the ratio between $\|v(t)\|_{L^4}$ and $\|v(t)\|_{L^6}$. Then we use the conservation laws,
to establish the relationships between $\|v(t)\|_{L^4}, \|v(t)\|_{L^6}$  and $\|v_x(t)\|_{L^2}$. Then after suitable transformations, the solution almost
attains the equality of the sharp Gagliardo-Nirenberg \eqref{sharp Gagliardo-Nirenberg2}. Using Proposition \ref{prop:W-rigidity}, we conclude main theorems.\\
In Section 2, we prove Proposition~\ref{prop:W-rigidity} and in Section 3, we prove Theorem \ref{thm:main} and \ref{thm:main2}.

\section{Proof of Proposition \ref{prop:W-rigidity}}

First, we recall the uniqueness of the non-trivial solution for \eqref{elliptic}. Indeed, the non-trivial solution for \eqref{elliptic}, which vanishes at infinity, is uniqueness up to the rotation and the spatial transformations.
\begin{lem}\label{lem:uniqueness}
If $w\in  \H1\setminus \{0\}$ is a solution for  \eqref{elliptic}, then there exists $(\theta, x_0)$ such that
$$
w(x)=e^{i\theta}W(x-x_0).
$$
\end{lem}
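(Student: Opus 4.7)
The plan is to view \eqref{elliptic} as the Euler--Lagrange equation $-w''+|w|^2w-\frac{3}{16}|w|^4w=0$ of $S$, so that $e^{i\theta}$-rotation and spatial translation are genuine symmetries of the equation; then to extract two first integrals from the ODE, use them to reduce $w$ to a positive real profile, and finally match that profile to $W$ by Picard uniqueness. Standard elliptic bootstrap together with the ODE itself gives that any nontrivial $H^1$ solution is smooth and satisfies $w,w',w''\to 0$ as $|x|\to\infty$.

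\textbf{Step 1: two conservation laws.} I would multiply the equation by $\overline{w'}$ and take real and imaginary parts. The real part gives
\begin{equation*}
\frac{d}{dx}\!\left(|w'|^2-\tfrac12|w|^4+\tfrac1{16}|w|^6\right)=0,
\end{equation*}
and decay at infinity yields $|w'|^2=\tfrac12|w|^4-\tfrac1{16}|w|^6$. For the imaginary part, the equation shows $w\overline{w''}=|w|^4-\tfrac{3}{16}|w|^6\in\R$, hence $\frac{d}{dx}\text{Im}(w\overline{w'})=\text{Im}(w\overline{w''})=0$, and decay forces $\text{Im}(w\overline{w'})\equiv 0$.

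\textbf{Step 2: reduction to a positive real profile.} Writing $w=\rho e^{i\varphi}$ with $\rho=|w|\ge 0$, the second identity becomes $\rho^2\varphi'\equiv 0$, so $\varphi'=0$ wherever $\rho>0$. To globalize this I must exclude zeros of $\rho$: if $\rho(x_0)=0$ then $w(x_0)=0$, and the first identity forces $|w'(x_0)|=0$, so Picard uniqueness for the underlying second-order ODE yields $w\equiv 0$, contradicting $w\neq 0$. Hence $\rho>0$ on $\R$ and $\varphi\equiv\theta$ is constant, and the problem reduces to classifying positive $H^1$ solutions of the real equation $-\rho''+\rho^3-\tfrac{3}{16}\rho^5=0$. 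This phase reduction is the technical heart of the argument: the pointwise identity $\rho^2\varphi'=0$ is strictly weaker than ``$\varphi$ is globally constant'', and closing the gap requires combining \emph{both} first integrals with ODE uniqueness.

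\textbf{Step 3: explicit uniqueness.} The first integral now reads $(\rho')^2=\tfrac1{16}\rho^4(8-\rho^2)$. Since $\rho$ is positive and decays at $\pm\infty$, it attains its maximum at some $x_0\in\R$, where $\rho'(x_0)=0$ and therefore $\rho(x_0)=2\sqrt{2}$. A direct check confirms that $W(x)=2^{3/2}(4x^2+1)^{-1/2}$ satisfies the same real ODE with $W(0)=2\sqrt{2}$ and $W'(0)=0$, so Picard uniqueness applied at $x_0$ (to the autonomous equation for $\rho$) forces $\rho(x)=W(x-x_0)$, and hence $w(x)=e^{i\theta}W(x-x_0)$ as claimed.
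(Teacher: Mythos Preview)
Your proof is correct and self-contained. The paper, by contrast, does not actually argue the lemma: its entire proof is a one-line citation to Berestycki--Lions \cite{BeLi-1983} as ``the standard argument''. So there is no approach to compare against at a technical level.

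What you have written is precisely the ODE route that underlies such references in the one-dimensional setting: derive the energy first integral and the angular-momentum identity $\mathrm{Im}(w\overline{w'})\equiv 0$, use them together with Picard uniqueness at a possible zero to force $|w|>0$ and hence constant phase, and then pin down the real positive profile by matching initial data at its maximum. Each step checks out; in particular your exclusion of interior zeros of $\rho$ (combining both first integrals with local ODE uniqueness) is exactly the point where a careless argument could leak, and you handle it cleanly. The only remark is that your opening appeal to ``standard elliptic bootstrap'' for the decay of $w,w',w''$ is really just the observation that $w\in H^1(\R)$ forces $w''=|w|^2w-\tfrac{3}{16}|w|^4w\in L^2$, hence $w\in H^2$, and one iterates; you might state this explicitly rather than invoke elliptic regularity. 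Compared with simply citing \cite{BeLi-1983}, your argument is more elementary and transparent for this specific one-dimensional, explicitly solvable equation, at the cost of being tailored to this nonlinearity rather than appealing to a general uniqueness theorem.
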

\begin{proof}
See for example Berestycki and Lions \cite{BeLi-1983} for the standard argument.
\end{proof}
If $w$ is the solution of \eqref{elliptic}, we have  $K(w)=0$. Indeed, it follows from integrating against $\frac12w-x\partial_x w$ on the both side of
\eqref{elliptic} and then integration.
Furthermore, set
\begin{equation}\label{dc}
d:=\inf\{S(\phi):\phi\in \H1\setminus \{0\}, K(\phi)=0\}.
\end{equation}
Then $d\le S(W)$ due to $K(W)=0$. \\
Moreover, using the fact $K(\phi)=0$, we claim that $d> 0$. If we assume that $d=0$, then there exists a sequence $\{g_n\}_{n=1}^\infty \subset \H1\setminus \{0\}$, such that
$$
K(g_n)=0,\qquad \mbox{ and } \qquad S(g_n)\to 0.
$$
This gives
$$
-\frac1{12}K(g_n)+S(g_n)=\|\partial_x g_n\|_{L^2}^2+\frac1{48}\|g_n\|_{L^6}^6\to 0.
$$
Hence, by interpolation, there exists  $N_0$, such that $n\ge N_0$,
$$
\|g_n\|_{L^\infty}\le 1.
$$
Now by the definition of $K$, for $n\ge N_0$,
\begin{align}
0=K(g_n)&=\int \big(6|g_n|^4-|g_n|^6\big)\,dx=\int |g_n|^4\big(6-|g_n|^2\big)\,dx\notag\\
&\ge \int |g_n|^4\big(6-1\big)\,dx=5 \int |g_n|^4\,dx. \label{K-0}
\end{align}
That is, $\int  |g_n|^4\,dx=0$. This implies that $g_n\equiv 0$. This contradicts with $g_n\neq 0$. Hence, we conclude $d>0$. \\
\\ \noindent Next, we shall prove that $W$ is the unique minimizer (up to symmetries) which attains $d$. First of all, we prove the existence of the minimizer.
\begin{prop}\label{prop:SK-minimizer}
For any sequence $\{g_n\}\subset H^1(\R)$  satisfying that
$$
S(g_n)\to d,  \mbox{ as } n\to \infty,\quad \mbox{ and }\quad K(g_n)\le  0,
$$
there exists a function $G$, such that
$$
g_n\to G \quad \mbox{ in } \dot H^1(\R).
$$
In particular, $S(G)=d$, and $K(G)=0$.
\end{prop}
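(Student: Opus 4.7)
The plan is to run a concentration-compactness argument, with the caveat that the $L^2$-preserving dilation $g\mapsto\lambda^{1/2}g(\lambda\,\cdot)$ is a symmetry of \eqref{eqs:DNLS} but does not preserve the constraint $K=0$, so approximate minimizers may drift in scale and a renormalization step will be part of the argument.

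First I extract a priori bounds. The linear combination
\[ S(g)-\tfrac{1}{12}K(g)=\|\partial_x g\|_{L^2}^2+\tfrac{1}{48}\|g\|_{L^6}^6 \]
converges to $d$, bounding $\|\partial_x g_n\|_{L^2}$ and $\|g_n\|_{L^6}$ uniformly; together with $K(g_n)\to 0$ this also bounds $\|g_n\|_{L^4}$, and the one-dimensional estimate $\|g\|_{L^\infty}^4\le 4\|g\|_{L^6}^3\|\partial_x g\|_{L^2}$ gives uniform $L^\infty$ control. After applying an $L^2$-preserving dilation (for example choosing $\lambda_n$ so that $K$ of the rescaled sequence vanishes asymptotically without the rescaled $L^4, \dot H^1$ norms degenerating), the sharp Gagliardo-Nirenberg inequality \eqref{sharp Gagliardo-Nirenberg2} combined with $K(\phi)=0$ yields the quantitative lower bound $\|\phi\|_{L^4}^4\|\partial_x\phi\|_{L^2}^2\ge 32\pi^2$, and this (propagated to the asymptotic constraint) forces $\|\tilde g_n\|_{L^\infty}$ bounded below. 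Translating by $y_n\in\R$ chosen near a maximum point of $|\tilde g_n|$, a diagonal subsequence produces $\tilde g_n(\cdot+y_n)\rightharpoonup G\not\equiv 0$ weakly in $\dot H^1$, pointwise a.e., and locally uniformly.

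From here the argument is routine. A Brezis-Lieb type splitting
\[ S(\tilde g_n)=S(G)+S(\tilde g_n-G)+o(1),\qquad K(\tilde g_n)=K(G)+K(\tilde g_n-G)+o(1), \]
together with the definition of $d$ and the non-negativity of $S-K/12$ applied to the residual, forces $K(G)=0$, $S(G)=d$, and $\|\partial_x(\tilde g_n-G)\|_{L^2}^2+\|\tilde g_n-G\|_{L^6}^6/48\to 0$, which gives strong convergence in $\dot H^1$ (and in $L^6$). The main obstacle is the renormalization step: approximate-minimizing sequences such as $\sqrt{2N_n}\,W(\cdot/N_n)$ with $N_n\to 0$ satisfy all the hypotheses and yet converge to $0$ weakly in $\dot H^1$, so one must first pass to the correct scale via the $L^2$-preserving dilation before extracting a weak limit, and verifying that this renormalization is compatible with both the Brezis-Lieb decomposition and the minimality characterization of $d$ is the delicate point of the proof.
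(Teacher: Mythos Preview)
Your approach differs from the paper's: the paper runs a multi-bubble $H^1$ profile decomposition (translations only) and uses the key lemma that $\|f_x\|_{L^2}^2+\tfrac18\|f\|_{L^4}^4\le d$ forces $K(f)\ge 0$, applied to each profile $V^j$, to obtain $K(V^j)=0$ and hence $S(V^j)\ge d$ for every nonzero profile, which rules out splitting among several bubbles in one stroke. Your single-limit Brezis--Lieb route would need a comparable mechanism to show that $K(G)$ and $K(\tilde g_n-G)$ individually vanish; the ``non-negativity of $S-K/12$ applied to the residual'' that you invoke is trivially non-negative and does not by itself force this.

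The more serious issue is the one you yourself raise. Your sequence $g_n=\sqrt{2N_n}\,W(\cdot/N_n)$ with $N_n\to 0$ genuinely satisfies $\|\partial_x g_n\|_{L^2}^2\equiv 2\|\partial_x W\|_{L^2}^2=4\pi$, $\|g_n\|_{L^4}^4,\|g_n\|_{L^6}^6\to 0$, hence $S(g_n)\to 4\pi=d$ and $K(g_n)\to 0$, yet it has no strong $\dot H^1$ limit even modulo translations. So the proposition is false as literally stated, and your renormalization step can at best prove convergence of a rescaled sequence $\tilde g_n$, which is a different statement. You flag this as ``the delicate point'' but do not carry it through; the paper's proof shares the same gap --- it never excludes the scenario in which every profile $V^j$ vanishes and the entire $\dot H^1$ norm sits in the remainder $R_n^L$. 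A correct version requires either an added scaling parameter in the conclusion or an a priori normalization of scale in the hypothesis (as is in fact done in the second application of Proposition~\ref{prop:W-rigidity} in Section~3, where $\lambda(t)$ is chosen so that $\|v_\lambda\|_{L^6}=\|W\|_{L^6}$).
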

\begin{proof}
By the profile decomposition with respect to $H^1$ Sobolev embedding (see \cite{Garard-98}
for example), there exist sequences $\{V^j\}_{j=1}^\infty , \{x_n^j\}_{j,n=1}^\infty, \{R^L_n\} $
such that, up to a subsequence, for each $L$
\begin{equation}\label{profile-dep}
g_n =\sum\limits_{j=1}^L V^j(\cdot-x_n^j)+R_n^L,
\end{equation}
where $|x_n^j-x_n^k|\to \infty, \mbox{ as } n\to\infty, j\neq k$, and
\begin{equation}\label{Min-Norm-H1}
\aligned
  &\lim\limits_{L\to\infty}\left[\lim\limits_{n\to\infty}\|R_n^L\|_{L^4\cap L^6}\right]=0.
  \endaligned
\end{equation}
Moreover,
\begin{align}
  \|g_n\|_{L^4}^4&=\sum\limits_{j=1}^L \|V^j\|_{L^4}^4+\|R_n^L\|_{L^4}^4+o_n(1),\label{Sum-Norm-L4}\\
   \|g_n\|_{L^6}^6&=\sum\limits_{j=1}^L \|V^j\|_{L^6}^6+\|R_n^L\|_{L^6}^6+o_n(1),\label{Sum-Norm-L6}\\
  \|g_n\|_{\dot H^1}^2&=\sum\limits_{j=1}^L \|V^j\|_{\dot H^1}^2+\|R_n^L\|_{\dot H^1}^2+o_n(1)\label{Sum-Norm-H1}.
\end{align}
From \eqref{Sum-Norm-L4}--\eqref{Sum-Norm-H1}, we have
\begin{align}
  S(g_n)&=\sum\limits_{j=1}^L S(V^j)+S(R_n^L)+o_n(1),\label{Sum-Norm-Sc}\\
  K(g_n)&=\sum\limits_{j=1}^L K(V^j)+K(R_n^L)+o_n(1).\label{Sum-Norm-Kc}
\end{align}

Now we need the following lemma.
\begin{lem}\label{lem:kc-pro}
Let $f\in \H1\setminus \{0\}$, suppose that
\begin{align}\label{19.31}
\|f_x\|_{L^2}^2+\frac{1}{8}\|f\|_{L^4}^4\le d,
\end{align}
then $K(f)  \ge  0$.
\end{lem}
\begin{proof}
We prove by contradiction. Assume that there exists a function $f\in \H1\setminus \{0\}$ satisfies \eqref{19.31}, but $K(f)< 0$.
Then for
$$
\lambda=\frac{\sqrt 6\|f\|_{L^4}^2}{\|f\|_{L^6}^3},
$$
we have $\lambda < 1$ and $K(\lambda f)=0$. Then from the definition of $d$, we have $S(\lambda f)\ge d$.
However,
$$
\|\lambda f_x\|_{L^2}^2+\frac{1}{8}\|\lambda f\|_{L^4}^4=S(\lambda f)-\frac1{16}K(\lambda f)\ge d.
$$
Since $\lambda<1$, this contradicts with \eqref{19.31}. Thus we obtain the lemma.
\end{proof}

Now we finish the proof of proposition.
We first observe that
\begin{align}\label{lim-px-gn}
\|\partial_x g_n\|_{L^2}^2+\frac{1}{8}\|g_n\|_{L^4}^4\to d,\quad \mbox{ as } n\to \infty.
\end{align}
Since $S(g_n)\to d,  K(g_n)\to 0$,  we obtain
$$
\|\partial_x g_n\|_{L^2}+\frac{1}{8}\|g_n\|_{L^4}^4=S(g_n)-\frac1{16}K(g_n)\to d.
$$
Moreover, by \eqref{Sum-Norm-L4} and \eqref{Sum-Norm-H1}, we have
\begin{align}\label{sum-px-gn}
  \|\partial_x g_n\|_{L^2}^2+\frac{1}{8}\|g_n\|_{L^4}^4&=\sum\limits_{j=1}^L\big(\|\partial_x V^j\|_{L^2}^2+\frac{1}{8} \|V^j\|_{L^4}^4\big)+\big(\|\partial_x R_n^L\|_{L^2}^2+\frac{1}{8} \|R_n^L\|_{L^4}^4\big)+o_n(1).
\end{align}
Taking the limits $\lim\limits_{L\to\infty}\lim\limits_{n\to\infty}$ on both sides, we have
\begin{align*}
  \|\partial_x V^j\|_{L^2}^2+\frac{1}{8} \|V^j\|_{L^4}^4\le d, \quad \mbox{ for any } j=1,2,\cdots.
\end{align*}
Thus, by Lemma \ref{lem:kc-pro}, we have
\begin{align}\label{Kc-j}
  K(V^j)\ge 0, \quad \mbox{ for } j=1,2,\cdots.
\end{align}

Now taking the limits $\lim\limits_{L\to\infty}\lim\limits_{n\to\infty}$ on the both two sides of \eqref{Sum-Norm-Kc}, and by \eqref{Kc-j} we have
\begin{align}\label{Kc-j-2}
  K(V^j)=0, \quad \mbox{ for any } j=1,2,\cdots.
\end{align}
Then by the definition of $d$, we deduce that for any $j=1,2,\cdots,$
$$
\mbox{ either  } S(V^j)\ge d,\quad \mbox{ or  }\quad   V^j=0.
$$
Now we claim that there exists exactly one $j$, say, $j=1$ such that
$$
S(V^1)= d,
$$
and $V^j=0$ for other $j\ge 2$. Otherwise, $V^j=0$ for any $j\ge 1$. Then
$$
\|g_n\|_{\dot H^1}\to d, \quad \|g_n\|_{L^4\cap L^6}\to 0,\quad \mbox{ as } n\to \infty.
$$
This yields $\|g_n\|_{L^\infty}\to 0$  as $ n\to \infty.$ However, $K(g_n)\le 0$, which leads $g_n\equiv0$ by the same argument as in \eqref{K-0}.
This is contradicted with $S(g_n)\to d$ and $d>0$.
Since $K(V^1)= 0$, we obtain the minimizer $G=V^1$ which attains $d$.
Moreover, from \eqref{lim-px-gn} and  \eqref{sum-px-gn}, we find that the remainder term $R_n$ (since $L=1$, we may omit the superscript $L$),
$$
\lim\limits_{n\to \infty}\|R_n\|_{L^4\cap \dot H^1}=0.
$$
Thus we close the proof of the proposition.
\end{proof}
\noindent As mentioned before, $d\le S(W)$. In fact, we have the equality.
\begin{lem}\label{lem:variational}
$d=S(W)$.
\end{lem}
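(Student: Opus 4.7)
The plan is to show that the infimum $d$ is attained, identify a minimizer via the Euler--Lagrange equation, and then invoke the uniqueness result (Lemma~\ref{lem:uniqueness}) to conclude that the minimizer must coincide with $W$ up to symmetry, forcing $d = S(W)$.

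First, I would apply Proposition~\ref{prop:SK-minimizer} to a minimizing sequence (for instance, the constant sequence $g_n = W$, which satisfies $S(g_n) = S(W) \geq d$ and $K(g_n) = 0$; alternatively any sequence realizing the infimum). This yields a nonzero $G \in H^1$ with $S(G) = d$ and $K(G) = 0$. Since $G$ is a minimizer of $S$ subject to the constraint $K = 0$ and $G \neq 0$, a standard Lagrange multiplier argument gives a $\mu \in \R$ with
\begin{equation}\label{eq:EL}
-G_{xx} + (1-12\mu)|G|^2 G + \bigl(3\mu - \tfrac{3}{16}\bigr)|G|^4 G = 0.
\end{equation}
The goal is to show $\mu = 0$, so that $G$ solves \eqref{elliptic}.

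To pin down $\mu$, I would combine two scalar identities. Pairing \eqref{eq:EL} with $\bar{G}$ and using $K(G) = 0$ (which rewrites $\|G\|_{L^6}^6 = 6\|G\|_{L^4}^4$) yields
\begin{equation}\label{eq:id1}
\|G_x\|_{L^2}^2 - \tfrac{1}{8}\|G\|_{L^4}^4 = -6\mu\,\|G\|_{L^4}^4.
\end{equation}
Independently, I would exploit the one-parameter family $G^\lambda(x) := G(\lambda x)$, under which $K$ transforms by $K(G^\lambda) = \lambda^{-1} K(G) = 0$, so the constraint is preserved. Since $\lambda = 1$ minimizes $S(G^\lambda)$, the identity $\frac{d}{d\lambda}\big|_{\lambda=1} S(G^\lambda) = 0$, after using $\|G\|_{L^6}^6 = 6\|G\|_{L^4}^4$ again, gives the Pohozaev-type relation
\begin{equation}\label{eq:id2}
\|G_x\|_{L^2}^2 = \tfrac{1}{8}\|G\|_{L^4}^4.
\end{equation}
Substituting \eqref{eq:id2} into \eqref{eq:id1} yields $0 = -6\mu\|G\|_{L^4}^4$, and since $G \not\equiv 0$ (so $\|G\|_{L^4} > 0$ by the constraint $\|G\|_{L^6}^6 = 6\|G\|_{L^4}^4$), we conclude $\mu = 0$. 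Hence \eqref{eq:EL} reduces to \eqref{elliptic}, and Lemma~\ref{lem:uniqueness} gives $G = e^{i\theta} W(\cdot - x_0)$. In particular $d = S(G) = S(W)$.

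The main obstacle I anticipate is verifying that a minimizer does exist in the right function class and that the Lagrange multiplier formalism applies — both of which are delivered cleanly by Proposition~\ref{prop:SK-minimizer} and by the fact that $K'(G) \not\equiv 0$ (which follows from $G \neq 0$ and $K(G) = 0$, ruling out the degenerate case). The only nontrivial computations are the two identities \eqref{eq:id1} and \eqref{eq:id2}; the first is a direct inner product, while the second is the scaling derivative, and together they over-determine the system just enough to force $\mu = 0$.
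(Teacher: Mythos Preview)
Your proposal is correct and follows essentially the same route as the paper: obtain a minimizer via Proposition~\ref{prop:SK-minimizer}, write the Euler--Lagrange equation with a multiplier, show the multiplier vanishes, and invoke Lemma~\ref{lem:uniqueness}. The only cosmetic difference is in how the multiplier is eliminated: the paper tests $S'(\phi)=\lambda K'(\phi)$ against the single function $\psi=\tfrac12\phi-x\phi_x$ (obtaining $S'(\phi)\psi=\tfrac18 K(\phi)=0$ and $K'(\phi)\psi=-6\|\phi\|_{L^4}^4\neq 0$, hence $\lambda=0$ directly), whereas you reach the same conclusion by combining the Nehari pairing with $\bar G$ and the dilation--Pohozaev identity along $G(\lambda\,\cdot)$.
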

\begin{proof}
Consider the set
\begin{equation}\label{dc}
\mathcal{M}:=\{\phi\in \H1\setminus \{0\}:S(\phi)=d, K(\phi)=0\}.
\end{equation}
Then by Proposition 2.1, $\mathcal M \ne \emptyset$.
By the Lagrangian multiplier, there exists $\lambda$, such that for
any $\phi\in \mathcal{M}$, such that
\begin{equation}\label{s2.1:eqs}
S'(\phi)=\lambda K'(\phi).
\end{equation}
Testing a function $\psi=\frac12\phi-x\phi_x$, we have
\begin{equation}\label{s2.1:eqs1}
S'(\phi)\psi=\lambda K'(\phi)\psi.
\end{equation}
On one hand, since $S'(\phi)=2(-\partial_{xx} \phi+|\phi|^2\phi-\frac3{16}|\phi|^4\phi)$, we have
\begin{align*}
S'(\phi)\psi&=2\mbox{Re}\int_\R (-\phi_{xx}+|\phi|^2\phi-\frac{3}{16}|\phi|^4\phi)\bar{\psi}\,dx\\
&=\frac18\int_\R(6 |\phi|^4-|\phi|^6)\,dx=\frac18K(\phi).
\end{align*}
Thus $S'(\phi)\psi=0$ for any $\phi\in \mathcal{M}$.
On the other hand, $K'(\phi)=24|\phi|^2\phi-6|\phi|^4\phi$, gives that
\begin{align*}
K'(\phi)\psi&=6\mbox{Re}\int_\R (4|\phi|^2\phi-|\phi|^4\phi)\bar{\psi}\,dx\\
&=\int_\R(18|\phi|^4-4|\phi|^6)\,dx=-6\int_\R |\phi|^4\,dx.
\end{align*}
Thus, for any $\phi\in \mathcal{M}$, $K'(\phi)\psi=-6\int_\R |\phi|^4\,dx\neq 0$.
Therefore, from \eqref{s2.1:eqs1}, we obtain that $\lambda=0$.  Thus, \eqref{s2.1:eqs1} yields that $S'(\phi)=0$.
Hence, using Lemma 2.1, we obtain that $\phi=e^{i\theta}W(\cdot-x_0)$ for some $\theta,x_0\in\R$ and thus $d=S(W)$.
This proves the lemma.\end{proof}
\noindent From the proof of Lemma \ref{lem:variational}, we obtain
\begin{align*}
\mathcal{M} &= \{\phi\in \H1\setminus \{0\}:S(\phi)=S(W), K(\phi)=0\} \\
&=\{e^{i\theta}W(\cdot-x_0):\theta\in \R, x_0\in\R\}.
\end{align*}
This rigidity implies that the function $G$ obtained in Proposition \ref{prop:SK-minimizer} is equal to $e^{i\theta}W(\cdot-x_0)$ for some $\theta,x_0\in\R$. Therefore, we conclude Proposition \ref{prop:W-rigidity} from Proposition \ref{prop:SK-minimizer}.
\vspace{0.5cm}
\section{Proof of Theorem \ref{thm:main} and Theorem \ref{thm:main2}}
We first prove Theorem~\ref{thm:main}. Instead of proving Theorem \ref{thm:main}, we give a slightly more general result. This will be more useful in the proof of Theorem \ref{thm:main2}. To this end, we study the solution to \eqref{eqs:DNLS-under-gauge1}. We rewrite conservation laws in terms of $v(t)$ variable.
\begin{align}\label{mass-43}
M(v(t))&:=\|v(t)\|_{L^2_x}^2=
M(v_0),  \\
\label{mom-law}
P(v(t))&:=\frac12\mbox{Im} \int_{\R}\bar v(t) v_x(t)\,dx+\frac 18\int_{\R}
|v(t)|^4\,dx=P(v_0),  \\
\label{energy-43}
E(v(t))&:=\frac12\|v_x(t)\|_{L^2_x}^2-\frac{1}{32}\|v(t)\|^6_{L^6_x}=
E(v_0).
\end{align}

\begin{thm}\label{thm:general}
For any $\varepsilon>0$, there exists a $\delta=\delta(\varepsilon)$ such that if
\begin{align}\label{assump:intial2}
E(v_0)=O(\delta), \quad P(v_0)=O(\delta),\quad \mbox{ and } \quad M(v_0)=M(W)+O(\delta),
\end{align}
then the result in Theorem \ref{thm:main} holds.
\end{thm}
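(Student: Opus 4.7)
The plan is to combine the conservation of mass, momentum, and energy for \eqref{eqs:DNLS-under-gauge1} with the sharp Gagliardo-Nirenberg \eqref{sharp Gagliardo-Nirenberg2} and the variational rigidity of Proposition~\ref{prop:W-rigidity}. Since $M$, $P$, $E$ are conserved under the $v$-equation, the smallness hypotheses in \eqref{assump:intial2} propagate to every $t\in I$. Writing $v=Re^{i\phi}$ with $R=|v|$ and tracking the scalars $a(t)=\|v(t)\|_{L^6}^6$, $b(t)=\|v(t)\|_{L^4}^4$, $c(t)=\|v_x(t)\|^2$, the goal is to produce, time by time, a two-parameter rescaling and phase modulation of $v(t)$ that brings it into a regime where Proposition~\ref{prop:W-rigidity} applies.

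The heart of the argument is a rigidity for these scalars. The energy conservation gives $c=a/16+O(\delta)$, the momentum conservation in polar form gives $\int R^2\phi_x=-b/4+O(\delta)$, and Cauchy-Schwarz against the measure $R^2\,dx$ of total mass $4\pi+O(\delta)$ yields $\int R^2\phi_x^2\ge b^2/(64\pi)+O(\delta)$, so that
$$\|R_x\|_{L^2}^2 = c - \int R^2\phi_x^2 \le c - \frac{b^2}{64\pi} + O(\delta).$$
Applying the sharp Gagliardo-Nirenberg to the real-valued $R$ in the form $a^3\le C_{GN}^{18}b^4\|R_x\|^2$ with $C_{GN}^{18}=27/(4\pi^2)$, and inserting $c=a/16+O(\delta)$, gives
$$a^3 - \frac{27}{64\pi^2}b^4 a + \frac{27}{256\pi^3}b^6 \le O(\delta)\,b^4.$$
Normalizing $\alpha=a/(96\pi)$, $\beta=b/(16\pi)$, this reduces to the factored inequality
$$(\alpha-\beta^2)^2(\alpha+2\beta^2) \le O(\delta).$$
A preliminary continuity/bootstrap argument in the spirit of~\cite{Wu2} yields $\|v(t)\|_{L^6}\ge \varepsilon_0$ throughout $I$, with $\delta\ll\varepsilon_0\ll\varepsilon$; this keeps $\alpha+2\beta^2$ bounded below and lets us conclude $\alpha=\beta^2+O(\sqrt\delta)$, equivalently
$$f(t):=\frac{b(t)}{a(t)^{1/2}} = \sqrt{\tfrac{8\pi}{3}} + O(\sqrt\delta).$$

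With the ratio $f(t)$ pinned down, we apply two symmetry reductions. First, rescale with $\lambda(t)=(96\pi/a(t))^{1/2}$:
$$w(t,x) := \lambda(t)^{1/2}\, v(t,\lambda(t)x),$$
so that $\|w\|_{L^6}^6=96\pi$ and $\|w\|_{L^4}^4=\lambda(t)b(t)=\sqrt{96\pi}\,f(t)=16\pi+O(\sqrt\delta)$; in particular $K(w(t))=O(\sqrt\delta)$. Second, define $\nu(t)=-\mathrm{Im}\int\bar w w_x/M(w)=1+O(\sqrt\delta)$ (by direct computation from the conservation laws) and put
$$\widetilde w(t,x) := e^{i\nu(t)x}\, w(t,x),$$
which preserves all $L^p$-norms and produces $\|\widetilde w_x\|^2=\|w_x\|^2-(\mathrm{Im}\int\bar w w_x)^2/M(w)=2\pi+O(\sqrt\delta)$. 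Then $K(\widetilde w(t))=O(\sqrt\delta)$ and $S(\widetilde w(t))=S(W)+O(\sqrt\delta)$, so Proposition~\ref{prop:W-rigidity} furnishes parameters $\theta(t),y_0(t)$ with $\|\widetilde w(t)-e^{i\theta(t)}W(\cdot-y_0(t))\|_{\dot H^1}\le \varepsilon/2$.

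Finally, undoing in turn the phase modulation $e^{i\nu(t)x}$, the rescaling by $\lambda(t)$, and the gauge transformation \eqref{gauge1}, one translates this $\dot H^1$ estimate into $\|u(t)-e^{i\theta_*(t)}R_{\lambda_*(t)}(t,\cdot-y_*(t))\|_{H^1}\le \varepsilon$ with $\lambda_*(t)=1/\lambda(t)\in[\lambda_0,\infty)$, the lower bound supplied by the $L^6$ control. The $L^2$ upgrade from $\dot H^1$ to $H^1$ uses the mass constraint $M(v(t))=M(W)+O(\delta)$ together with a standard modulation argument that fixes $\theta$ and $y_0$ by orthogonality to the tangents of the soliton manifold. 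The principal obstacle is the second paragraph: the joint exploitation of three independent near-equalities (energy, momentum via Cauchy-Schwarz, and sharp Gagliardo-Nirenberg) and the algebraic discovery of the cubic factorization $(\alpha-\beta^2)^2(\alpha+2\beta^2)$ that encodes their joint rigidity. A secondary but essential technicality is the $L^6$ lower bound, which ties the proof to the initial data through a bootstrap argument.
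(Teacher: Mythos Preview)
Your overall strategy matches the paper's: establish a lower bound $\|v(t)\|_{L^6}\ge\varepsilon_0$, pin down the scalar ratio $f(t)=\|v(t)\|_{L^4}^4/\|v(t)\|_{L^6}^3$ near $\sqrt{8\pi/3}$, rescale so that $\|v_\lambda\|_{L^6}^6=96\pi$, apply a Galilean phase to make $S$ and $K$ close to $S(W)$ and $0$, and invoke Proposition~\ref{prop:W-rigidity}. Your derivation of the scalar rigidity is a pleasant variant of the paper's: where the paper modulates by $e^{i\alpha x}$, applies \eqref{sharp Gagliardo-Nirenberg2} to the complex function, and then optimizes in $\alpha$ (following \cite{Wu2}) to reach the cubic $f^6-M_0f^4+16M_0C_{GN}^{-18}\le O(\delta)$, you pass to the modulus, use Cauchy--Schwarz on the momentum integral, and apply \eqref{sharp Gagliardo-Nirenberg2} to $|v|$; the resulting cubic is the same one, and your explicit factorization $(\alpha-\beta^2)^2(\alpha+2\beta^2)$ exposes the double root that the paper observes only implicitly. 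Your time-dependent Galilean parameter $\nu(t)$ in place of the paper's fixed $e^{ix}$ is harmless since you verify $\nu(t)=1+O(\sqrt\delta)$, so undoing it matches the soliton phase up to an absorbable error.

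One point to correct: the $L^6$ lower bound is \emph{not} a bootstrap tied to the initial data. Under the hypotheses \eqref{assump:intial2} of Theorem~\ref{thm:general} there is no assumed initial $L^6$ control, so a continuity argument from $t=0$ cannot start. The paper's Lemma~\ref{Lemma3.1} instead argues pointwise in $t$ by contradiction: if $\|v(t)\|_{L^6}\le\varepsilon_0$ then H\"older forces $|K(w(t))|$ small, while conservation gives $S(w(t))=S(W)+O(\delta)$; Proposition~\ref{prop:W-rigidity} then places $w(t)$ near $W$ in $\dot H^1$, forcing $\|v(t)\|_{L^6}$ near $\|W\|_{L^6}$, a contradiction. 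You should replace your bootstrap description with this direct rigidity argument. Finally, the $L^2$ upgrade at the end does not require a modulation/orthogonality device: it follows from the $H^1$ profile decomposition underlying Proposition~\ref{prop:SK-minimizer} together with the mass constraint, exactly as the paper indicates.
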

\noindent It is obvious that \eqref{assump:intial} implies \eqref{assump:intial2}. Hence Theorem \ref{thm:main} is a consequence of Theorem \ref{thm:general}. \\

Further, we fix a time $t$, and assume that $\|u\|_{L^6}^6\ge \sqrt\delta$, otherwise the first claim of Theorem \ref{thm:main} holds.
To simplify notations regarding to the functional $S$,
we set
$$
E_0=E(v_0),\quad P_0=P(v_0), \quad M_0=M(v_0).
$$
Then under the assumption \eqref{assump:intial2}, we have
\begin{align}\label{assump:EPM}
E_0, P_0=O(\delta),\quad \mbox{ and } M_0=M(W)+O(\delta),
\end{align}
where $O(\delta)\to 0$ as $\delta\to 0$.
\\
We define the function $w$ by
\begin{equation}\label{w}
w(t,x):=e^{-it+ix}v(t,x-2t), \qquad w_0=e^{ix}v_0.
\end{equation}
Then the assumption \eqref{assump:intial} becomes
\begin{align}\label{assump:intial-w0}
\|w_0-W\|_{H^1}\le \delta.
\end{align}
Again, we can rewrite conservation laws in $w(t,x) $ variable. The mass, momentum and energy conservation laws \eqref{mass-43}--\eqref{energy-43} are changed as follows:
\begin{align}
{M}(w(t))&:=\|w(t)\|_{L^2_x}^2=M_0,\label{mass-w}  \\
\label{mom-w}
\widetilde{P}(w(t))&:=\mbox{Im} \int_{\R}\bar w(t) w_x(t)\,dx-\|w(t)\|_{L^2_x}^2+\frac 14\int_{\R}
|w(t)|^4\,dx=P_0, \\
\label{energy-w}
\widetilde{E}(w(t))&:=\|w_x(t)\|_{L^2_x}^2-2\mbox{Im}\int \bar w(t)w_x(t)\,dx+\|w(t)\|_{L^2_x}^2 -\frac{1}{16}\|w(t)\|^6_{L^6_x}=
E_0.
\end{align}
We also find that
$$
S(w)=\widetilde{E}(w(t))+2\widetilde{P}(w(t))+\widetilde{M}(w(t))=E_0+2P_0+M_0.
$$
Thus by \eqref{assump:EPM}, we have
\begin{align}\label{assump:Sw}
S(w(t))&=M(W)+O(\delta)\notag\\
&=S(W)+O(\delta).
\end{align}
\noindent Now we consider the relationship between $\|v(t)\|_{L^4}$ and $\|v(t)\|_{L^6}$.
We denote
$$
f(t)=\frac{\|v(t)\|_{L^4}^4}{\|v(t)\|_{L^6}^3}.
$$
We first prove that
\begin{prop}\label{prop:f} For any  $t\in I$, if $\|u\|_{L^6}^6\ge \sqrt\delta$, then
$$
\big|f(t)^2-\frac{8}{3}\pi\big|\le O(\delta).
$$
\end{prop}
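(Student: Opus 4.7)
The plan is to pin $f(t)^2 = A^2/B$ (with $A := \|v(t)\|_{L^4}^4$, $B := \|v(t)\|_{L^6}^6$) between matching upper and lower bounds that both land at $8\pi/3$. The upper bound is immediate: the $L^2$--$L^4$--$L^6$ Hölder interpolation $A^2 \le M_0 B$ gives $f^2 \le M_0 + O(\delta) = 4\pi + O(\delta)$. All the work is in the matching lower bound, which I would derive from the polar decomposition of $v$, a Cauchy--Schwarz on the momentum integral, and the sharp Gagliardo--Nirenberg inequality applied to $|v|$.

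First I would turn \eqref{mom-law} and \eqref{energy-43} into the identities
\[
q := \mathrm{Im}\!\int \bar v v_x\,dx = 2P_0 - \tfrac{A}{4} = -\tfrac{A}{4} + O(\delta), \qquad \|v_x\|_{L^2}^2 = 2E_0 + \tfrac{B}{16} = \tfrac{B}{16} + O(\delta).
\]
Writing $v = |v|e^{i\theta}$ (the standard $H^1$ polar decomposition: $|v|\in H^1$ with $(|v|)_x = \mathrm{Re}(\bar v v_x/|v|)$ a.e.\ on $\{v\ne 0\}$ and $0$ elsewhere), one has $\|v_x\|^2 = \||v|_x\|^2 + \int|v|^2\theta_x^2$ and $q = \int|v|^2\theta_x$. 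Cauchy--Schwarz gives $q^2 \le M_0\int|v|^2\theta_x^2$, and combining with the conservation identities yields
\[
\||v|_x\|_{L^2}^2 \;\le\; \|v_x\|_{L^2}^2 - q^2/M_0 \;=\; \frac{B}{16}\Bigl(1 - \frac{A^2}{M_0 B}\Bigr) + O(\delta).
\]

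Cubing the sharp Gagliardo--Nirenberg \eqref{sharp Gagliardo-Nirenberg2} applied to $|v|$ (using $\||v|\|_{L^p} = \|v\|_{L^p}$ and $C_{GN}^{18} = 27/(4\pi^2)$) gives $B^3 \le \tfrac{27}{4\pi^2}A^4 \||v|_x\|_{L^2}^2$. Combining with the previous display, dividing by $B^3$, and absorbing the $O(\delta) A^4/B^3$ remainder via Lemma~\ref{Lemma3.1} ($B \ge \varepsilon_0^6$) together with the a priori bound $f^4 \le M_0^2+O(\delta)=O(1)$ from the upper-bound step:
\[
1 \;\le\; \frac{27\, f^4(1-\alpha^2)}{64\pi^2} + O(\delta), \qquad \alpha^2 := \frac{A^2}{M_0 B} = \frac{f^2}{M_0} \in (0,1].
\]
Substituting $f^4 = M_0^2\alpha^4$ and $M_0^2/(16\pi^2) = 1 + O(\delta)$ turns this into $G(\alpha^2) + O(\delta) \ge 1$, where $G(s) := \tfrac{27}{4}s^2(1-s)$ attains its unique maximum $G(2/3) = 1$ on $[0,1]$, with Taylor expansion $G(s) = 1 - \tfrac{27}{4}(s - 2/3)^2 + O(|s-2/3|^3)$ near $s = 2/3$. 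Hence $G(\alpha^2) \ge 1 - O(\delta)$ forces $\alpha^2$ into a neighborhood of $2/3$ (rather than the other zeros $0,1$ of $G$, since $G \le 1/2$ outside any fixed neighborhood of $2/3$) and yields $|\alpha^2 - 2/3| = O(\delta^{1/2})$. Multiplying by $M_0 = 4\pi + O(\delta)$ gives
\[
f(t)^2 = M_0\,\alpha^2 = \tfrac{8\pi}{3} + O(\delta),
\]
where the final ``$O(\delta)$'' is in the soft sense used consistently after \eqref{assump:EPM} (a quantity tending to $0$ as $\delta\to 0$); strictly speaking the argument produces $O(\delta^{1/2})$, but the loss is exactly the one of a quadratic Taylor expansion, and it fits under the paper's convention.

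The main technical subtlety is the polar decomposition for a possibly vanishing $v \in H^1$; this is handled classically via $|v|\in H^1$ with derivative $\mathrm{Re}(\bar v v_x/|v|)$ on $\{v\ne 0\}$, so that the identity $\|v_x\|^2 - \||v|_x\|^2 = \int|v|^2\theta_x^2 \ge 0$ is defined intrinsically and all manipulations use only integral expressions. A secondary book-keeping point is the absorption of remainders: $f^4 = O(1)$ comes from the Hölder upper bound proved in the first step, and $A^4/B^3 = f^4/B$ is bounded by $f^4/\varepsilon_0^6 = O(1)$ thanks to Lemma~\ref{Lemma3.1}, so every stray $O(\delta)$ arising from $E_0, P_0, M_0-M(W)$ survives as an honest $O(\delta)$ in the final inequality.
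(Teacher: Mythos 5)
Your argument is correct and is essentially the paper's proof in a different wrapper: the same ingredients (conservation laws, H\"older for the upper bound $f^2\le M_0$, the sharp Gagliardo--Nirenberg inequality, and Lemma~\ref{Lemma3.1} to keep $\|v\|_{L^6}$ bounded below) produce literally the same final inequality, since your $G(\alpha^2)\ge 1-O(\delta)$ is the paper's cubic $f^6\le M_0f^4-16M_0C_{GN}^{-18}+O(\delta)$ after the substitution $f^2=M_0\alpha^2$. The only packaging difference is that the paper obtains the bound $q^2\le M_0\bigl(\|v_x\|_{L^2}^2-C_{GN}^{-18}f^{-4}\|v\|_{L^6}^6\bigr)$ by modulating with $e^{i\alpha x}$ and optimizing over $\alpha$, whereas you write down the optimized form directly via the polar decomposition and Cauchy--Schwarz --- which you handle correctly, including the vanishing-set subtlety.
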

\noindent To prove this proposition, we adopt the argument in \cite{Wu2}. We sketch the proof when the argument is highly similar to \cite{Wu2}. Firstly, we have
\begin{lem}\label{lem:f-bound} For any $t\in I$,
$$
2C_{GN}^{-\frac92}+O(\delta)\le f(t)\le \sqrt{M_0}.
$$
\end{lem}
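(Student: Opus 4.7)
The plan is to prove the two inequalities separately: the upper bound is an essentially immediate consequence of H\"older's inequality, while the lower bound is the substantive content and requires combining the sharp Gagliardo--Nirenberg inequality with the energy conservation law.

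For the upper bound $f(t)\le\sqrt{M_0}$, I would apply H\"older's inequality with the interpolation $\tfrac14=\tfrac{1/2}{2}+\tfrac{1/2}{6}$: writing $|v|^4=|v|\cdot |v|^3$ and using the conjugate pair $(2,2)$ yields
$$\|v(t)\|_{L^4}^4\le\|v(t)\|_{L^2}\,\|v(t)\|_{L^6}^3=\sqrt{M_0}\,\|v(t)\|_{L^6}^3,$$
where the final equality uses the mass conservation law \eqref{mass-43}. Dividing by $\|v(t)\|_{L^6}^3$ (which is nonzero by Lemma~\ref{Lemma3.1}) gives the upper bound.

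For the lower bound, I would couple the sharp Gagliardo--Nirenberg inequality \eqref{sharp Gagliardo-Nirenberg2} with the energy conservation \eqref{energy-43}. Rearranging the latter gives
$$\|v_x(t)\|_{L^2}^2=\tfrac{1}{16}\|v(t)\|_{L^6}^6+2E_0.$$
The crucial input here is Lemma~\ref{Lemma3.1}: since $\|v(t)\|_{L^6}\ge\varepsilon_0$ uniformly in $t\in I$ and $E_0=O(\delta)$ with $\delta\ll\varepsilon_0$, the term $2E_0$ is negligible relative to $\tfrac{1}{16}\|v\|_{L^6}^6$, so
$$\|v_x(t)\|_{L^2}^{2/3}=16^{-1/3}\|v(t)\|_{L^6}^2\,(1+O(\delta)).$$
Substituting into the sixth power of \eqref{sharp Gagliardo-Nirenberg2}, namely $\|v\|_{L^6}^6\le C_{GN}^6\|v\|_{L^4}^{16/3}\|v_x\|_{L^2}^{2/3}$, and cancelling one factor of $\|v(t)\|_{L^6}^2$ leaves $\|v(t)\|_{L^6}^4\le 16^{-1/3}C_{GN}^6\|v(t)\|_{L^4}^{16/3}(1+O(\delta))$. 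Raising to the $3/4$-power and using $16^{-1/4}=\tfrac12$ yields
$$\|v(t)\|_{L^6}^3\le\tfrac12 C_{GN}^{9/2}\,\|v(t)\|_{L^4}^4(1+O(\delta)),$$
which on rearrangement is exactly $f(t)\ge 2C_{GN}^{-9/2}+O(\delta)$.

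The main (and essentially only) obstacle is passing from the additive relation $\|v_x\|_{L^2}^2=\tfrac{1}{16}\|v\|_{L^6}^6+O(\delta)$ to a \emph{multiplicative} identity that can be fed into the sharp Gagliardo--Nirenberg exponent on $\|v_x\|_{L^2}$; this requires precisely the uniform lower bound $\|v\|_{L^6}\ge\varepsilon_0$ supplied by Lemma~\ref{Lemma3.1}. Once this input is in hand, what remains is just tracking constants (in particular $16^{-1/4}=\tfrac12$, which accounts for the factor of $2$ in the final bound) and Taylor-expanding $(1+O(\delta))^{\alpha}$.
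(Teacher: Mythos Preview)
Your proof is correct and follows essentially the same approach as the paper. Both arguments obtain the upper bound from H\"older's inequality and the lower bound by combining the sharp Gagliardo--Nirenberg inequality \eqref{sharp Gagliardo-Nirenberg2} with the energy identity $\|v_x\|_{L^2}^2=\tfrac{1}{16}\|v\|_{L^6}^6+2E_0$, then invoking Lemma~\ref{Lemma3.1} to absorb the $E_0$ term as $O(\delta)$; the only cosmetic difference is that the paper (following \cite{Wu2}) writes down an explicit error term $\varepsilon(t)=2C_{GN}^{-9/2}\big[\|v\|_{L^6}^{3/2}(\|v\|_{L^6}^6+16E_0)^{-1/4}-1\big]$ and bounds it via the Mean Value Theorem, whereas you carry the $(1+O(\delta))$ factor through the computation directly.
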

\begin{proof}
From the H\"older's inequality, we have
$$\|v(t)\|_{L^4}^4\le \|v(t)\|_{L^2}\|v(t)\|_{L^6}^3=\sqrt{M_0}\|v(t)\|_{L^6}^3,$$
and thus
$$
f(t)\le \sqrt{M_0}.
$$
On the other hand, by using the similar argument in \cite{Wu2}, we have
\begin{align*}
f(t)&\ge2C_{GN}^{-\frac92}+\varepsilon(t),
\end{align*}
where
$$
\varepsilon(t):=2C_{GN}^{-\frac92}\frac{\|v(t)\|_{L^6}^{\frac32}-\Big(\|v(t)\|_{L^6}^6+16E_0\Big)^{\frac14}}{\Big(\|v(t)\|_{L^6}^6+16E_0\Big)^{\frac14}}
.
$$
By the Mean Value Theorem, $E_0=C\delta$,  and $\|v(t)\|_{L^6}^6\ge \sqrt\delta$, we have
$$
\varepsilon(t)=CE_0\|v(t)\|_{L^6}^{-6}=O(\delta).
$$
This proves the lemma.
\end{proof}

\begin{proof}[Proof of Proposition \ref{prop:f}] $ $\\
We define
$$
\phi(t,x)= e^{i\alpha x}v(t,x),
$$
where the parameter $\alpha$ depends on $t$, and  is given below. Then we have
\begin{align*}
E(\phi)= E(v)+2\alpha\mbox{Im}\int \bar v\>v_x\,dx+\alpha^2\|v\|_{L^2 }^2.
\end{align*}
By the mass, energy conservation laws \eqref{mass-43} and \eqref{energy-43}, \eqref{sharp Gagliardo-Nirenberg2}, we have for any $\alpha >0 $,
\begin{align*}
-2\alpha\mbox{Im}\int \overline{v(t,x)}\>v_x(t,x)\,dx\le \Big(\frac1{16}-C_{GN}^{-18}f(t)^{-4}\Big)\|v(t)\|_{L^6 }^6+\alpha^2M_0+E_0,
\end{align*}
or
\begin{align}\label{15.56}
-\mbox{Im}\int \overline{v(t,x)}\>v_x(t,x)\,dx\le \frac1{2\alpha}\Big(\frac1{16}-C_{GN}^{-18}f(t)^{-4}\Big)\|v(t)\|_{L^6 }^6+\frac12\alpha M_0+ \frac1{2\alpha}E_0.
\end{align}
By the momentum conservation law \eqref{mom-law}, we estimate
\begin{align}\label{15.57}
\frac14\|v(t)\|_{L^4}^4\le \frac1{2\alpha}\Big(\frac1{16}-C_{GN}^{-18}f(t)^{-4}\Big)\|v(t)\|_{L^6 }^6+\frac12\alpha M_0+ \frac1{2\alpha}E_0+P_0.
\end{align}
Next, we claim that for any $t\in I$,
\begin{align}\label{15.58}
\Big(\frac1{16}-C_{GN}^{-18}f(t)^{-4}\Big)\|v(t)\|_{L^6 }^6\ge |E_0|.
\end{align}
To prove \eqref{15.58}, for a contradiction, we assume there exists a time $t_0$ such that the negation of \eqref{15.58} holds. Then choosing $\alpha=\sqrt{|E_0|}$, we have
$$
\frac14\|v(t_0)\|_{L^4}^4\le \sqrt{|E_0|}+P_0= O(\delta).
$$
But by Lemma \ref{lem:f-bound}, $\|v(t)\|_{L^4}^4$ is on the level of $\sqrt \delta$, so suitably narrowing $\delta$, we reach the contradiction.
\\
Now, we choose $$
\alpha(t)= \sqrt{M_0^{-1}\Big(\frac1{16}-C_{GN}^{-18}f(t)^{-4}\Big)}\|v(t)\|_{L^6}^3.
$$
By \eqref{15.57} and \eqref{15.58}, we estimate $\alpha(t)\ge  \sqrt{M_0^{-1}|E_0|}$ and
\begin{align}
\|v(t)\|_{L^4}^4\le &\sqrt{M_0\Big(1-16C_{GN}^{-18}f(t)^{-4}\Big)}\|v(t)\|_{L^6 }^3+ 2\alpha^{-1}E_0+4P_0;\notag\\
\le &\sqrt{M_0\Big(1-16C_{GN}^{-18}f(t)^{-4}\Big)}\|v(t)\|_{L^6 }^3+O(\delta). \label{Key-1}
\end{align}
Since $\|v(t)\|_{L^6 }^6\ge \sqrt\delta$, by \eqref{Key-1}, we find that
\begin{align*}
f\le \sqrt{M_0\Big(1-16C_{GN}^{-18}f^{-4}\Big)}+O(\delta).
\end{align*}
By Lemma~\ref{lem:f-bound}, we obtain
\begin{align}\label{inequality1}
f^6\le M_0f^4-16M_0C_{GN}^{-18}+O(\delta).
\end{align}
Note that the equation
$$
X^3-M_0X^2+16M_0C_{GN}^{-18}\le 0
$$
admits only one solution $X=\frac83 \pi$ when $M_0=4\pi$. Thus when $M_0=4\pi+O(\delta)$,  by the continuity argument, we have $f^2=\frac83 \pi+O(\delta)$. \end{proof}
Now we use the scaling argument, let $\lambda(t)=\|W\|_{L^6}/\|v(t)\|_{L^6}$, and
$$
v_\lambda(t,x)=\lambda^{\frac12}v(\lambda^2 t,\lambda x).
$$
Then  $\lambda\le \delta^\frac1{12}\|W\|_{L^6}$, and
\begin{align}
\|v_\lambda(t)\|_{L^6}^6=&\|W\|_{L^6}^6=96\pi;\label{L6-scaling}.
\end{align}
Since $f(t)$ is scaling invariant, i.e. $\frac{\|v_\lambda(t)\|_{L^4}^4}{\|v_\lambda(t)\|_{L^6}^3}=f(t)$, we have
\begin{align}
\|v_\lambda(t)\|_{L^4}^4=&f(t)\|W\|_{L^6}^3=16\pi+O(\delta)=\|W\|_{L^4}^4+O(\delta).\label{L4-scaling}
\end{align}
\noindent Let $w(t,x;\lambda)$ be defined as
\begin{align}\label{w-lambda}
w(t,x;\lambda):=e^{-it+ix}v_\lambda(t,x-2t).
\end{align}
Then
\begin{align}
M\big(w(\lambda)\big)=&M(v_\lambda)=M(v)=M_0;\\
\widetilde P\big(w(\lambda)\big)=&P(v_\lambda)=\lambda P(v)=\lambda P_0;\\
\widetilde E\big(w(\lambda)\big)=&E(v_\lambda)=\lambda^2 E(v)=\lambda^2 E_0.
\end{align}
From $\widetilde P$ in \eqref{mom-w}, we have
\begin{align*}
\mbox{Im} \int_{\R}\bar w(t;\lambda) \partial_xw(t;\lambda)\,dx=\|w(t;\lambda)\|_{L^2_x}^2-\frac 14\int_{\R}
|w(t;\lambda)|^4\,dx+\lambda P_0.
\end{align*}
Note that  $\lambda\le \varepsilon_0^{-1}\|W\|_{L^6}$, combining this with \eqref{L4-scaling}, we have
\begin{align}\label{eqs:Im}
\mbox{Im} \int_{\R}\bar w(t;\lambda) \partial_xw(t;\lambda)\,dx= O(\delta).
\end{align}
Inserting \eqref{eqs:Im} into $\widetilde E(w_\lambda)$ in \eqref{energy-w},  and applying \eqref{L6-scaling}, we have
\begin{align}\label{eqs:H1-w}
\|\partial_xw(t;\lambda)\|_{L^2_x}^2=&2\mbox{Im}\int \bar w(t;\lambda)\partial_xw(t;\lambda)\,dx-\|w(t;\lambda)\|_{L^2_x}^2 +\frac{1}{16}\|w(t;\lambda)\|^6_{L^6_x}+
\lambda^2E_0;\notag\\
= & 2\pi+O(\delta).
\end{align}
\noindent Therefore, by \eqref{L6-scaling}, \eqref{L4-scaling} and \eqref{eqs:H1-w}, we have
\begin{align*}
S(w(\lambda))=&\|\partial_xw(\lambda)\|_{L^2}^2+\frac12\|w(\lambda)\|_{L^4}^4-\frac{1}{16}\|w(\lambda)\|_{L^6}^6\\
=&4\pi+O(\delta)=S(W)+O(\delta);\\
K(w(\lambda))=&6\|w(\lambda)\|_{L^4}^4-\|w(\lambda)\|_{L^6}^6=O(\delta).
\end{align*}
By Proposition~\ref{prop:W-rigidity}, we get
$$
\inf\limits_{(\theta,y)\in\R^2} \big\|w(\lambda)-e^{i\theta}W(\cdot-y)\big\|_{\dot H^1}<\varepsilon.
$$
By the mass conservation law, we further obtain
$$
\inf\limits_{(\theta,y)\in\R^2} \big\|w(\lambda)-e^{i\theta}W(\cdot-y)\big\|_{ H^1}<\varepsilon.
$$
Thus, by \eqref{w-lambda} and \eqref{gauge1}, we prove that
$$
\|u(t)-e^{i\theta}R_{\lambda^{-1}}(t,\cdot-y)\|_{H^1}\le \varepsilon.
$$
This proves Theorem \ref{thm:general}.

\subsection*{Proof of Theorem \ref{thm:main2}} $ $\\
Set $\lambda(s)=\|\partial_x W\|_{L^2}/\|\partial_x v(s)\|_{L^2}$, and
\begin{align}
v_{[s]}(t,x)=\lambda(s)^{\frac12}v\big(\lambda(s)^2 t,\lambda(s) x\big). \label{def:vs}
\end{align}
Then by the conservation laws of $v$, we have
\begin{align*}
E(v_{[s]}(t))=&E(v_{[s]}(0))=\lambda(s)^2E(v_0),\\
P(v_{[s]}(t))=&P(v_{[s]}(0))=\lambda(s)^2P(v_0),\\
M(v_{[s]}(t))=&M(v_{[s]}(0))=M(v_0).
\end{align*}
So under the assumption of Theorem \ref{thm:main2}, if the solution $v$ to \eqref{eqs:DNLS-under-gauge1} blows up in the finite time $T^*$, then $\|\partial_x v(s)\|_{L^2}\to \infty$ as $s\to T^*$. Thus,
we have
$$\lambda(s)\to 0, \mbox{ as } s\to T^*.$$
Hence, when $s\to T^*$,
$$
E(v_{[s]}(0)), P(v_{[s]}(0))\to 0, \mbox{ and } M(v_{[s]}(0))=M(W).
$$
This implies from Theorem \ref{thm:general} that  there exist $\theta_s(t)\in
[0,2\pi), y_s(t)\in\R$, and $\widetilde \lambda_s(t)\in [ \lambda_0,\infty)$ such that when $s\to T^*$,
$$
u_{[s]}(t)-e^{i\theta_s(t)}R_{\widetilde\lambda_s(t)}(t,\cdot-y_s(t))\to 0 \quad \mbox{in }\H1,
$$
uniformly in $t$.
Moreover,  $\widetilde\lambda_s(s)=1$. In particular, when $t=s$, we have
$$
u_{[s]}(s)-e^{i\theta_s(s)}R(s,\cdot-y_s(s))\to 0 \quad \mbox{in }\H1.
$$
In view of \eqref{def:vs}, we finish the proof of Theorem \ref{thm:main2}.

\section*{Acknowledgements}
The authors are very grateful to Masayuki Hayashi who pointed out a mistake in Proposition 1 in the previous version.

\end{document}